       \def\la{\lambda}     
         \def\r{\rho}
\def\e{\varepsilon}
\def\F{\Phi}
\def\D{{\mathbb D}}
   \def\cb{{\mathcal B}}
   \def\cd{{\mathcal D}}
\def\({\left(}       \def\){\right)}
\newcommand\norm[1]{\left\lVert#1\right\rVert}
\newtheorem{prop}{\sc Proposition}
\newtheorem{lem}[prop]{\sc Lemma}
\newtheorem{thm}[prop]{\sc Theorem}
\newtheorem{ex}[prop]{\sc Example}
\begin{document}
\title[Estimates for truncated area functionals]{Estimates for truncated area functionals on the Bloch space}
\author[I. Efraimidis]{Iason Efraimidis}
\address{Departamento de Matem\'aticas, Universidad Aut\'onoma de
Madrid, 28049 Madrid, Spain}
\email{iason.efraimidis@uam.es}
\author[A. Mas]{Alejandro Mas}
\address{Departamento de Análisis Matemático, Facultad de Ciencias, 
Universidad de Málaga, 29071 Málaga, Spain}
\email{alejandro.mas@uma.es}
\author[D. Vukoti\'c]{Dragan Vukoti\'c}
\address{Departamento de Matem\'aticas, Universidad Aut\'onoma de
Madrid, 28049 Madrid, Spain} \email{dragan.vukotic@uam.es}
\thanks{All authors are partially supported by PID2019-106870GB-I00 from MICINN, Spain. The first author is supported by a María Zambrano contract, reference number CA3/RSUE/2021-00386, from UAM and Ministerio de Universidades, Spain (Plan de Recuperación, Transformación y Resiliencia).}
\subjclass[2010]{30H30, 30C50}
\keywords{Bloch space, coefficient problems}
\date{08 December, 2022.}

\begin{abstract}
Recently, Kayumov \cite{K} obtained a sharp estimate for the $n$-th truncated area functional for normalized functions in the Bloch space for $n\le 5$ and then, together with Wirths \cite{KW1}, extended the result for $n=6$. We prove that for the functions with non-negative Taylor coefficients, the same sharp estimate is valid for all $n$. For arbitrary functions, we obtain an estimate that is asymptotically of the same order but slightly larger (roughly by a factor of $4/e$). We also consider related weighted estimates for functionals involving the powers $n^t$, $t>0$, and show that the  exponent $t=1$ represents the critical case for the expected sharp estimate.
\end{abstract}
\maketitle
\section{Introduction}
 \label{sect-intro}
Let $\D$ denote the unit disc in the complex plane. The \textit{Bloch space\/} $\cb$ (see \cite{ACP}) is the Banach space of all analytic functions in $\D$ with the finite Bloch norm:
$$
 \|f\|_\cb = |f(0)| + \sup_{z\in\D} (1-|z|^2) |f^\prime(z)| \,.
$$
For a function in $\cb$ that vanishes at the origin, written as a power series $f(z)=\sum_{n=1}^\infty b_n z^n$, the following sharp estimate for the $n$-th Taylor coefficient is known \cite{W}:
$$
 B_n = \sup\{|b_n|\,\colon\,\|f\|_\cb\le 1\} = \frac{n+1}{2n} \( \frac{n+1}{n-1} \)^{(n-1)/2}\,, \quad n\ge 2\,.
$$
In the case $n=1$ this should be interpreted as $B_1=1$ (and is easily   proved directly). It can be checked by elementary means that the sequence $(B_n)_n$ is strictly increasing.
\par
The Dirichlet space $\cd $ is the set of all analytic functions in
the unit disc $\D$ with the finite area integral
$$
 \int_\D |f^\prime (z)|^2 d A(z) =  \sum_{n=1}^\infty n |b_n|^2\,.
$$
It is easy to see that $\cd \subset \cb$ but not the other way around. Thus, for a function $f\in\cb$, the area functional  $\sum_{k=1}^\infty k |b_k|^2$ need not be finite. However, any truncated area functional is bounded:
$$
 \F_n(f) = \sum_{k=1}^n k |b_k|^2 \le C_n \|f\|_\cb^2\,,
$$
where the constant $C_n$ depends only on $n$ but not on $f$. This has led Kayumov \cite{K} to show that, for small values of $n$, the above  truncated area functional is dominated by the supremum of its last term only:
\begin{equation}
 \sum_{k=1}^n k |b_k|^2 \le n B_n^2\,, \qquad 2\le n\le 5\,,
 \label{eqn-Kayumov}
\end{equation}
for all $\|f\|_\cb\le 1$ with $f(0)=0$, with equality if and only if $f$ is an appropriate constant multiple of $z^n$. Kayumov and Wirths \cite{KW1} then showed that the estimate also holds for $n=6$. Related extremal problems have also been considered recently in other papers (\textit{cf.\/} \cite{IK} and \cite{KW3}).
\par
A standard argument involving normal families can be used to show that, for each fixed $n\ge 1$, there exists an extremal function for the problem
\begin{equation}
 M_n = \sup \left\{ \F_n(f) = \sum_{k=1}^n k |b_k|^2 \,\colon\,\|f\|_\cb\le 1, f(z)=\sum_{k=1}^\infty b_k z^k \right\}\,.
 \label{eqn-Mn}
\end{equation}
Since $\|B_n z^n\|_\cb=1$, it is clear that $M_n \ge n B_n^2$. A natural question is whether estimate \eqref{eqn-Kayumov} extends for arbitrary $n\ge 2$. The present note is devoted to this question.
\par
We first display some evidence towards the correctness of the expected bound \eqref{eqn-Kayumov}. Specifically, as the main result  of Section~\ref{sect-pos-coeffs}, we prove this inequality for all $n$ for the functions with non-negative coefficients. This is the content of Theorem~\ref{thm-non-neg-coeffs}.
\par
Next, in Section~\ref{sect-weighted-est} we consider a more general weighted functional $\F^{t}_{n}(f) = \sum_{k=1}^{n} k^{t} |b_k|^{2}$ for $t\ge 0$ and the corresponding question as to whether $\F^t_n(f)\le n^t B_n^2$ for all functions $f$ with $\|f\|_\cb\le 1$ and $f(z)=\sum_{k=1}^\infty b_k z^k$. It is easy to see that this inequality holds for all $t\ge 2$. Other results obtained here imply that it is true for all $t\ge 1$ for the functions with non-negative Taylor coefficients. However, we show that, when $t<1$, this estimate fails for all sufficiently large $n$, even for functions with non-negative coefficients (see Theorem~\ref{thm-w-fcnl}). Thus, the exponent $t=1$ considered by Kayumov and Wirths is the critical value, which provides further motivation for studying generalizations of their results.
\par
In Section~\ref{sect-gen-res} of the paper we observe that  $\sum_{k=1}^n k |b_k|^2 \le \frac{n^n}{(n-1)^{n-1}}\le \frac{4}{e} n B_n^2$, for all $n\ge 2$ (Proposition~\ref{prop-est}). Although larger, this bound is asymptotically of the same order as $n B_n^2$. We also note that a typical Marty-type relation for the coefficients of extremal functions shows that at least one of the coefficients $b_n$ and $b_{n+1}$ must be zero, as expected. However, obtaining more detailed information from a number of possible variational methods has proved surprisingly difficult for us. Even proving that for any extremal function we must have $b_n\neq 0$ has eluded us so far.
\par
In the final Subsection~\ref{subsect-concl-rks} we include further discussion and remarks, showing that our Theorem~\ref{thm-non-neg-coeffs} does not imply directly the desired estimate in the general case. We include a relevant example in this respect.

\section{Functions with non-negative coefficients}
 \label{sect-pos-coeffs}
\par
In this section, we will prove that the bound obtained in \cite{K, KW1} is correct for all $n$ for the functions with non-negative Taylor coefficients.
\subsection{On extremal functions in the general setting}\label{subsect-extr-fcns}
The following observations will be useful. For any $f\in\cb$ and any constant $c$ we have $\F_n (cf)=|c|^2 \F_n (f)$. Thus, if $\|f\|_\cb<1$, since the function $f/\|f\|_\cb$ has unit norm and $\F_n (f/\|f\|_\cb)>\F_n(f)$, it is clear that $f$ cannot be an extremal function. In other words, if $F$ is an extremal function for \eqref{eqn-Mn}, we must have $\|F\|_\cb=1$.
\par
Extremal functions are clearly not unique since for a function $f\in\cb$ its rotation $f_\la$, given by $f_\la(z)=f(\la z)$,  $|\la|=1$, has the same norm and $\F_n(f_\la)=\F_n(f)$. Another issue is whether such a function is unique up to a rotation, which is what is expected by the findings from \cite{K} and \cite{KW1} for $2\le n\le 6$.
\subsection{Functions with non-negative coefficients}\label{subsect-pos-coeff}
We now turn to the restricted extremal problem for functions with non-negative coefficients:
\begin{equation}
 \sup \left\{ \F_n(f) = \sum_{k=1}^n k |b_k|^2 \,\colon\,\|f\|_\cb\le 1, \, f(z)=\sum_{k=1}^\infty b_k z^k, \, b_j\ge 0 \mathrm{\ for \ all \ } j\ge 1 \right\}\,.
 \label{eqn-Mn-restr}
\end{equation}
A few simple observations are in order. If $\Vert f \Vert_\cb\le 1$ and $f(z)=\sum_{j\ge  1} b_j z^j$ with $b_{j}\ge 0$ for all $j\ge 1$, it is obvious that $|f^\prime (z)| \le f^\prime (|z|)$, hence
$$
 \|f\|_\cb = \sup_{0\le r<1} (1-r^2) |f^\prime (r)|\,.
$$
Next, by the same argument as in Subsection~\ref{subsect-extr-fcns}, all extremal functions for the restricted problem \eqref{eqn-Mn-restr} have this property as well. If $f$ is such a function, it is clear that the truncated polynomial $p_n(z)=\sum_{j=1}^n b_j z^j$ (the Taylor polynomial of $f$ of order $n$) has the property that
$$
 \F_n (p_n) = \F_n (f) \quad \mbox{and} \quad \| p_n \|_\cb \le  \|f\|_\cb = 1\,.
$$
Moreover, if there exists at least one $k>n$ such that $b_k>0$, then clearly $\| p_n \|_\cb < 1$. But then $f$ cannot be extremal for $\F_n$ since $p_n/\|p_n\|_\cb$ and $f$ both have norm one and
$$
 \F_n \( \frac{p_n}{\|p_n\|_\cb} \) = \frac{1}{\|p_n\|_\cb^2} \F_n (f) > \F_n (f)\,.
$$
Thus, the only candidates for extremal functions for \eqref{eqn-Mn-restr} are polynomials of degree at most $n$. Note, however, that this consideration is not strictly necessary in the proof of the following lemma.
\begin{lem}\label{lem-coeff-est}
Let $f(z)=\sum_{j\ge  1} b_j z^j$ be an extremal function for the restricted extremal problem \eqref{eqn-Mn-restr}, with $b_j\ge 0$ for all $j\ge 1$. If $1\le k<m\le n$ and $b_k>0$, then
$$
 b_m\le \frac{m-k}{2m} b_k\,.
$$
\end{lem}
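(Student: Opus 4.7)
The plan is a variational/perturbation argument exploiting the extremality of $f$. Since $f$ is extremal for \eqref{eqn-Mn-restr} with non-negative coefficients, by the discussion preceding the lemma $f$ is a polynomial of degree at most $n$ with $\|f\|_\cb = 1$, and the supremum $\sup_{0 \le r < 1} (1-r^2) f'(r)$ is attained at some point $r^* \in (0, 1)$ (note $r^* = 0$ would force $f(z) = z$, which is not extremal for $n \ge 2$).

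I would compare $f$ with the one-parameter family
\[
g_t(z) = f(z) + t\bigl( z^m - \alpha\, z^k \bigr), \qquad t > 0,
\]
where $\alpha > 0$ is a parameter to be chosen. Since $b_k > 0$, the competitor $g_t$ has non-negative coefficients for all sufficiently small $t > 0$. The Bloch-norm constraint $\|g_t\|_\cb \le 1$, analyzed at the maximum point $r^*$ via an envelope-type argument, requires
\[
(1-(r^*)^2)\bigl( m (r^*)^{m-1} - \alpha k (r^*)^{k-1}\bigr) \le 0,
\]
i.e.\ $\alpha \ge m(r^*)^{m-k}/k$, at first order in $t$.

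Computing the first-order change
\[
\F_n(g_t) - \F_n(f) = 2t\,\bigl( m b_m - \alpha k b_k\bigr) + O(t^2)
\]
and invoking extremality of $f$ forces $m b_m - \alpha k b_k \le 0$ for every admissible $\alpha$; taking the minimal feasible $\alpha = m(r^*)^{m-k}/k$ gives the preliminary estimate $b_m \le b_k (r^*)^{m-k}$.

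The main difficulty is now to upgrade this to the sharper bound $b_m \le \frac{m-k}{2m}\, b_k$, i.e.\ to establish $(r^*)^{m-k} \le \frac{m-k}{2m}$. I expect this to follow from a second-order refinement of the argument: when the first-order inequality above is saturated, the competitor $g_t$ overshoots the Bloch ball at order $t^2$ due to the shift of the maximum point, and imposing $\F_n(g_t/\|g_t\|_\cb) \le \F_n(f)$ at order $t^2$ yields a further inequality involving the curvature quantities $K := (1-(r^*)^2) m(m-k)(r^*)^{m-2}$ (associated to the perturbation direction) and $H := \partial_r^2[(1-r^2)f'(r)]|_{r^*} < 0$ (associated to the extremal profile). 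After careful algebraic manipulation, using the equilibrium identity $[(1-r^2)f'(r)]'|_{r^*} = 0$, this second-order condition should produce exactly the factor $\frac{m-k}{2m}$. I anticipate the main obstacle to be the precise book-keeping of this second-order computation and the simplification that isolates the clean numerical factor.
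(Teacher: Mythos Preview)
Your infinitesimal variation with norm control only at the single point $r^*$ leaves a genuine gap. The first-order step yields at best $b_m\le (r^*)^{m-k}b_k$, and your hoped-for second-order refinement would involve the $f$-dependent curvature quantity $H=\partial_r^2[(1-r^2)f'(r)]\big|_{r^*}$; there is no reason this should collapse to the universal constant $\tfrac{m-k}{2m}$ independent of $f$ and of $r^*$. The factor $\tfrac{m-k}{2m}$ is not encoded in the local geometry of $(1-r^2)f'(r)$ near its maximum.

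The paper's argument avoids both pitfalls with two simple changes. First, instead of the minimal $\alpha$ at $r^*$, take $\alpha=m/k$: then the perturbation of the derivative is $t\,m(r^{m-1}-r^{k-1})\le 0$ for \emph{every} $r\in[0,1)$, so $\|g_t\|_\cb\le\|f\|_\cb$ holds globally with no analysis of maximum points. Second, use a \emph{finite} step rather than $t\to 0$: push $t$ all the way to $t=\tfrac{k}{m}b_k$ (the largest value keeping the $z^k$-coefficient non-negative), which kills the $z^k$ term entirely. This is exactly the paper's competitor
\[
g(z)=f(z)-b_k z^k+\tfrac{k}{m}b_k\,z^m.
\]
Because $\F_n$ is quadratic in the coefficients, the finite step retains the second-order contribution that the linearized argument throws away, and the inequality $\F_n(g)\le\F_n(f)$ reads
\[
2k b_k b_m+\tfrac{k^2}{m}b_k^2-k b_k^2\le 0,
\]
which is precisely $b_m\le\tfrac{m-k}{2m}b_k$. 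No second-order envelope computation is needed.
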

\begin{proof}
Let
$$
 g(z) = f(z) - b_k z^k + \frac{k}{m} b_k z^m\,.
$$
It is again an analytic function with non-negative coefficients. Since
$$
 g^\prime (r)= f^\prime (r) + k b_k (r^{m-1}-r^{k-1}) \le f^\prime (r)
$$
for $0\le r<1$, it follows that $\|g\|_\cb\le \|f\|_\cb$. Hence $g$ is in competition with $f$ and therefore
$$
 \F_n (g)= 2 k b_k b_m + \frac{k^2}{m} b_k^2 - k b_k^2 + \F_n(f) \le \F_n (f).
$$
Since $b_k>0$ by assumption, we obtain $2b_m + \frac{k}{m} b_k  \le b_k$, and the desired conclusion follows.
\end{proof}
\par
We are now ready to prove the main result of this section.
\par
\begin{thm}\label{thm-non-neg-coeffs}
If $\| f \|_\cb \le 1$ and $f(z)=\sum_{j\ge  1} b_j z^j$ with $b_j\ge 0$ for all $j\ge 1$, then
\[
 \F_n(f)=\sum_{k=1}^{n} k |b_k|^{2} \le n B_n^2, \quad \mathrm{for \ all \ } n\ge 2\,.
\]
Moreover, equality is only achieved for $f(z)= B_{n}z^{n}$.
\end{thm}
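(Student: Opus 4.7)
The plan is to proceed by strong induction on $n$. The statement is trivial for $n=1$ (where it reduces to the classical Bloch bound $b_1\le 1=B_1$), and for $n\ge 2$ I would invoke the observations accumulated before Lemma~\ref{lem-coeff-est}: the restricted problem \eqref{eqn-Mn-restr} admits an extremal function $f$, which necessarily has $\|f\|_\cb=1$ and (since only finitely many coefficients contribute to $\F_n$) may be taken to be a polynomial $f(z)=\sum_{j=1}^n b_j z^j$ with $b_j\ge 0$. Showing that such an extremal must equal $B_n z^n$ gives both the inequality and the equality case simultaneously.

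Suppose for contradiction that the extremal $f$ is not a monomial and let $k\in\{1,\dots,n-1\}$ be the largest index with $b_k>0$, so that $b_{k+1}=\dots=b_{n-1}=0$ while $b_n\ge 0$. For the truncation $\tilde f(z)=\sum_{j=1}^k b_j z^j$, the non-negativity of the coefficients gives $\tilde f^\prime(r)\le f^\prime(r)$ on $[0,1)$, hence $\|\tilde f\|_\cb\le 1$, and the inductive hypothesis applied at level $k$ yields
\[
 \sum_{j=1}^{k} j\, b_j^{2} \,=\, \F_k(\tilde f) \,\le\, k B_k^2\,,
\]
which in particular forces $b_k\le B_k$.

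For the remaining term in the identity $\F_n(f)=\F_k(\tilde f)+n b_n^2$, I would apply Lemma~\ref{lem-coeff-est} to $f$ with indices $k$ and $m=n$ (legitimate since $b_k>0$ and $1\le k<n$), obtaining $b_n\le \frac{n-k}{2n}\, b_k$; this bound also holds trivially when $b_n=0$. Squaring and combining with $b_k\le B_k$ bounds the last term by $\frac{(n-k)^2}{4n}B_k^2$, whence
\[
 \F_n(f) \,\le\, B_k^2 \left( k + \frac{(n-k)^2}{4n} \right)\,.
\]

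For $1\le k<n$ the elementary inequality $(n-k)^2<4n(n-k)$ (equivalent to $n-k<4n$) collapses the parenthesis to a quantity strictly less than $n$, while $B_k<B_n$ by the strict monotonicity of $(B_n)_n$ recorded in the introduction. Combining, $\F_n(f)<n B_n^2$, which contradicts the extremality of $f$ since $B_n z^n$ is itself a valid competitor attaining $nB_n^2$. Hence the extremal must be $B_n z^n$, proving both the bound and the uniqueness of the equality case. I do not anticipate any serious obstacle: the variational Lemma~\ref{lem-coeff-est} does all the essential work, and the remainder is a clean truncation step (justified by non-negativity of the coefficients) together with two elementary inequalities.
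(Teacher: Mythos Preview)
Your argument is correct and follows essentially the same route as the paper: induction on $n$, reduction to a polynomial extremal with $\|f\|_\cb=1$, the inductive bound $\F_k(f)\le kB_k^2$, and Lemma~\ref{lem-coeff-est} with $m=n$ to force a contradiction. The only differences are cosmetic: the paper turns extremality into a lower bound $b_n\ge\sqrt{(n-k)/n}\,B_k$ and collides it with the lemma's upper bound, whereas you push the lemma forward to get $\F_n(f)<nB_n^2$ directly; also, your hypothesis ``not a monomial'' should really read ``$b_j>0$ for some $j<n$'' (a monomial $b_m z^m$ with $m<n$ is still handled by your argument, just not by your wording).
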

\begin{proof}
First of all, we observe that it suffices to prove the statement only for extremal functions. We proceed by induction on $n$. As is well known, the statement is true for $n=2$. Let $n\ge 3$ and assume the inequality is true for all $k<n$. We will now show that it also holds for $n$.
\par
If $f$ is extremal for \eqref{eqn-Mn-restr}, as observed before, $f$ is a polynomial of degree at most $n$. Suppose that $b_j>0$ for some $j$ with $1\le j<n$, and let  $k = \max\{j\,:\,1\le j<n \mathrm{\ and \ }  b_j>0\}$. Recalling that the sequence $(B_n)_n$ is increasing, by our inductive assumption, we clearly have
$$
 n B_k^2 \le n B_n^2 \le \F_n(f) = \sum_{j=1}^k j b_j^2 + n b_n^2 = \F_k(f) + n b_n^2 \le k B_k^2 + n b_n^2\,.
$$
It follows that $n b_n^2 \ge (n-k) B_k^2$ and therefore
$$
 b_n \ge \sqrt{\frac{n-k}{n}} B_k > \frac{1}{2} \frac{n-k}{n} B_k \ge \frac{1}{2} \frac{n-k}{n} b_k \ge b_n
$$
in view of Lemma~\ref{lem-coeff-est} (with $m=n$), which is clearly absurd. Thus, it follows that $b_j=0$ for all $j$ with $1\le j<n$. As was already observed, only functions of norm one can be extremal and it follows immediately that  $B_n z^n$ is the only possible extremal function in this case.
\end{proof}
\par
It should be noted that Theorem~\ref{thm-non-neg-coeffs} is also true for functions $f$ with $f(z)=\sum_{k\ge 1} b_{k} z^{k}$ with $b_{k}=|b_{k}|e^{i(a+k b)}$, for some $a$, $b \in \mathbb{R}$. This is clear since, given a function $f$ of this kind, the function $e^{-ia}f\left( e^{-i b} z \right)$ also vanishes at the origin and has the same norm as $f$ and non-negative coefficients.

\section{On more general weighted estimates}
 \label{sect-weighted-est}
\par
As was noted in \cite[p.~467]{K}, it is quite simple to show that for any function with $\| f \|_\cb\le 1$ and $f(z)=\sum_{k \ge 1} b_k z^{k}$ the sharp estimate
$$
 \sum_{k=1}^{n} k^{2} |b_k|^{2}\le n^{2} B_n^{2}
$$
holds for all $n\ge 1$. This suggests the idea of studying the generalized functional
$$
 \F^{t}_{n}(f)= \sum_{k=1}^{n} k^{t} |b_k|^{2}
$$
with $t\ge 0$. A natural question is whether $\F^{t}_{n}(f)\le n^{t} B_n^{2}$, under the same hypotheses as above, and whether the inequality is sharp. We already know that this is true when $t=2$ and the problem studied previously constitutes the case $t=1$.
\par
It is easy to see that if the above weighted estimate is valid for all $n$ for the functional $\F^t_{n}$, then the analogous estimate is also satisfied for the functional $\F^{s}_{n}$, whenever $s\ge t$. Indeed, if $\| f \|_\cb\le 1$, $f(z)=\sum_{k \ge 1} b_k z^{k}$, the inequality
\[
 \F^{t}_{n}(f)= \sum_{k=1}^{n} k^{t} |b_k|^{2}\le n^{t} B_n^{2}
 \]
is satisfied and $s\ge t$, then clearly
\[
 \F^{s}_{n}(f) = \sum_{k=1}^{n} k^{s} |b_k|^{2} \le n^{s-t}\sum_{k=1}^{n} k^{t} |b_k|^{2} \le  n^{s-t} n^{t} B_n^{2} = n^{s}B_n^{2}.
\]
Thus, for all functions $f$ such that $\| f \|_\cb\le 1$ and $f(z)=\sum_{k \ge 1} b_k z^{k}$, it follows that
\begin{equation}
 \sum_{k=1}^{n} k^t |b_k|^2 \le n^t B_n^{2},
 \label{eqn-gen-est}
\end{equation}
whenever $t\ge 2$. Additionally, if $f$ has non-negative Taylor coefficients, inequality \eqref{eqn-gen-est} actually holds whenever $t\ge 1$ in view of Theorem~\ref{thm-non-neg-coeffs}.
\par
The following result will show that for all $t$ with $t<1$ the above estimate \eqref{eqn-gen-est} fails in a strong way. In fact, it cannot even be true for the functions with non-negative Taylor coefficients when $n$ is sufficiently large. This shows that the exponent $t=1$ (the basic case studied here and in \cite{K, KW1}) is really critical.
\begin{thm}\label{thm-w-fcnl}
Given $t<1$, there exist a positive integer $N$ that depends only on $t$ and such that for all $n\ge N$ there exists a function $f_n\in \mathcal{B}$ with non-negative Taylor coefficients, $f_n(0)=0$, $\|f_n\|_\cb\le 1$, and with the property that
\[
 \F^{t}_{n}(f_n) > n^{t} B_n^{2}\,.
\]
\end{thm}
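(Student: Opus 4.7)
The plan is to exhibit, for each $t<1$ and each sufficiently large $n$, an explicit two-term test function
\[
 f_n(z) = \epsilon_n z + (1-\delta_n)\, B_n z^n,
\]
with small positive parameters $\epsilon_n, \delta_n$ to be chosen. This $f_n$ has $f_n(0)=0$, non-negative Taylor coefficients, and
\[
 \F_n^t(f_n) = \epsilon_n^2 + (1-\delta_n)^2\, n^t B_n^2.
\]
The strategy is to tune $\epsilon_n$ just above a critical threshold so that the gain $\epsilon_n^2$ from the linear term beats the cost $(2\delta_n-\delta_n^2)\, n^t B_n^2$ of shrinking the $z^n$ coefficient, while $\delta_n$ is taken as small as the constraint $\|f_n\|_\cb \le 1$ allows.

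Because the coefficients are non-negative, $\|f_n\|_\cb = \sup_{r\in[0,1)} g(r)$ with $g(r) = \epsilon_n(1-r^2) + (1-\delta_n)\,G(r)$ and $G(r) := n B_n r^{n-1}(1-r^2)$. The two summands peak at $r=0$ and at $r_n:=\sqrt{(n-1)/(n+1)}$ with values $\epsilon_n$ and $1-\delta_n$ respectively, and $G(r_n)=1$ by the very definition of $B_n$. A direct computation gives $g(r_n)=(1-\delta_n)+2\epsilon_n/(n+1)$, $g'(r_n)=-2\epsilon_n r_n$, and $g''(r_n)=-(1-\delta_n)(n+1)^2-2\epsilon_n$; a second-order Taylor analysis at $r_n$ then bounds the nearby maximum of $g$ by $g(r_n)+C\,\epsilon_n^2/n^2$ for some absolute $C$. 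Away from a shrinking window about $r_n$, the unimodality of $G$ and its Taylor behavior force $G(r)$ to be bounded away from $1$, and the cruder estimate $g(r)\le \epsilon_n + (1-\delta_n)G(r)$ suffices to keep $g(r)<1$ provided $\epsilon_n$ is small. Setting $\delta_n := 2\epsilon_n/(n+1) + C\,\epsilon_n^2/n^2$ therefore gives $\|f_n\|_\cb\le 1$, and plugging into the formula for $\F_n^t(f_n)$ yields
\[
 \F_n^t(f_n) - n^t B_n^2 = \epsilon_n^2 - \frac{4\epsilon_n\,n^t B_n^2}{n+1} + O\!\bigl(\epsilon_n^2\, n^{t-2}\bigr).
\]
Since $B_n^2\to e^2/4$, I take $\epsilon_n := 2e^2\, n^{t-1}$: the main terms then satisfy $\epsilon_n^2 - 4\epsilon_n n^t B_n^2/(n+1)\sim 2e^4\, n^{2(t-1)}>0$, while the remainder $O(n^{3t-4})$ is of strictly smaller order (since $t<2$). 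For $n$ larger than some $N=N(t)$ one therefore has $\F_n^t(f_n)>n^t B_n^2$; the hypothesis $t<1$ is used to ensure both that the threshold is attainable with $\epsilon_n, \delta_n\to 0$ and that $f_n$ has strictly positive coefficients.

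The main obstacle is the Bloch norm bound itself: one must rule out any secondary local maximum of $g$ on $[0,1)$ that the Taylor expansion at $r_n$ does not capture. The naive estimate $g\le \epsilon_n + (1-\delta_n)$ obtained by summing the two peak values is too loose by a factor of order $n$ and would force $\delta_n\ge\epsilon_n$, wiping out the gain entirely; the argument is rescued only because the two summands peak at different points, $r=0$ and $r=r_n$, and because $G$ is sharply concentrated within a window of radius $O(1/n)$ about $r_n$. Making this separation quantitative---via the two-region split sketched above, with carefully chosen constants---is the routine but somewhat delicate part of the proof.
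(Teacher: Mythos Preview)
Your approach is correct in outline: the two-term test function $\epsilon_n z + (1-\delta_n)B_n z^n$ does work, and your perturbative bookkeeping near $r_n$ (with $G''(r_n)=-(n+1)^2$, a window of width $\asymp 1/n$, and the unimodality of $G$ outside it) is the right way to push it through. The computations you display check out, and the choice $\epsilon_n = 2e^2 n^{t-1}$ indeed makes the main term $\epsilon_n^2 - 4\epsilon_n n^t B_n^2/(n+1)\sim 2e^4 n^{2(t-1)}$ dominate the $O(n^{3t-4})$ error. The only caveat is that the ``routine but delicate'' part you flag really does need to be written out: you must pin down the window radius (it has to be exactly of order $1/n$---too small and $G$ is not uniformly bounded away from $1$ on the complement, too large and you lose the lower bound $|g''|\gtrsim n^2$), and you need a uniform bound on $G'''$ (of order $n^3$) to justify the second-order Taylor step.

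The paper uses the \emph{same} test function form but makes the opposite parameter choice: it fixes the linear coefficient equal to $1$ (not small) and writes $f_n(z)=z+b_n z^n$. This buys an algebraic shortcut that eliminates all the Taylor-window analysis: the norm condition $(1-r^2)(1+nb_n r^{n-1})\le 1$ is \emph{equivalent} to $n b_n\, r^{n-3}(1-r^2)\le 1$, because subtracting $1$ and dividing by $r^2$ is exact when the linear coefficient is $1$. Maximizing $r^{n-3}(1-r^2)$ in closed form then reduces the whole question to the single inequality
\[
 B_n^2 - \frac{(n-1)^{n-1}}{4n^2(n-3)^{n-3}} \le \frac{1}{n^{t+\varepsilon}},
\]
whose left side is shown to be $O(1/n)$ by elementary manipulations with $(1+1/x)^x$. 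So the paper's proof is entirely closed-form and avoids any local/global splitting; your approach trades that slickness for a more conceptual perturbative picture around the extremal $B_n z^n$.
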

\begin{proof}
Let $\varepsilon >0$ such that $t+\varepsilon<1$, for example, $\e=(1-t)/2$. Choose an integer $N\ge (2e^2)^{2/(1-t)}$ and, for $n\ge N$, consider the functions
\[
 f_n (z) = z + b_n z^{n}, \quad b_n = \sqrt{B_n^{2} - \frac{1}{n^{t+\varepsilon}}}.
\]
Clearly,
\[
 \F^{t}_{n}(f_n) = 1+n^{t}\left(B_n^{2}- \frac{1}{n^{t+\varepsilon}} \right) > n^{t}B_{n}^{2},
\]
hence it is only left to check that $\|f_n\|_\cb\le 1$ for all $n\ge N$. To this end, note that
\[
 \|f_n\|_\cb=\sup_{z\in \mathbb{D}}(1- |z|^{2}) | 1+n b_n z^{n-1}| = \sup_{0\le r <1}(1- r^{2}) \left( 1+n b_n r^{n-1}\right).
\]
Thus, in order to finish the proof, we need only show that for $n\ge N$ and for all $r\in (0,1)$ the inequality $(1- r^{2}) \left( 1+n b_n r^{n-1}\right)\le 1$ holds. The  last statement is easily seen to be equivalent to
$$
 n b_n r^{n-3}(1- r^{2}) \le 1, \quad \mathrm{for \ all \ } r\in (0,1) \quad \mathrm{and \ } n\ge N\,.
$$
If we define $h(r)=n b_n r^{n-3}(1- r^{2})$, it is clear that
\[
 \max_{0\le r \le 1} h(r)= \max \left\{ h(0), h\left( \sqrt{\frac{n-3}{n-1}} \right), h(1)\right\} =  n b_n \left( \frac{n-3}{n-1} \right)^{\frac{n-3}{2}} \frac{2}{n-1}.
\]
It is, thus, only left to show that $n b_n \left( \frac{n-3}{n-1} \right)^{\frac{n-3}{2}} \frac{2}{n-1} \le 1$ for $n\ge N$. The following chain of equivalences is clear:
\begin{align*}
 n b_n \left( \frac{n-3}{n-1} \right)^{\frac{n-3}{2}} \frac{2}{n-1} \le 1  & \Longleftrightarrow  b_n^2 \le \frac{(n-1)^{n-1}}{4n^{2} (n-3)^{n-3}} \Longleftrightarrow B_n^{2}- \frac{1}{n^{t+\varepsilon}} \le \frac{(n-1)^{n-1}}{4n^{2} (n-3)^{n-3}}
\\
 & \Longleftrightarrow   \frac{ (n+1)^{n+1}}{4 n^{2} (n-1)^{n-1}}- \frac{(n-1)^{n-1}}{4n^{2} (n-3)^{n-3}}\le \frac{1}{n^{t+\varepsilon}}\\
 &  \Longleftrightarrow   \frac{(n+1)^2 \left( 1+ \frac{ 2}{ n-1}\right)^{n-1 }- (n-1)^2 \left( 1+ \frac{ 2}{ n-3}\right)^{n-3 }}{n}\le 4n^{1-t-\varepsilon}\,.
\end{align*}
Obviously, the right-hand side  in the last inequality tends to infinity when $n$ tends to infinity. On the other hand, using the classical inequalities $ \left( 1+\frac{1}{x}\right)^{x}< e <\left( 1+\frac{1}{x}\right)^{x+1} $ for all $x>0$, we see that the left-hand side in the last inequality in the chain above satisfies
\begin{align*}
 \frac{(n+1)^2 \left( 1+ \frac{ 2}{ n-1}\right)^{n-1 }- (n-1)^2 \left( 1+ \frac{ 2}{ n-3}\right)^{n-3 }}{n}& \le e^2  \frac{(n+1)^2 - (n-1)^2 \left( 1+ \frac{ 2}{ n-3}\right)^{-2}}{n}\\
 &=e^2  \frac{(n+1)^2 - (n-3)^2}{n}\\
 &=e^2  \frac{8n-8}{n}\\
 &\le 8e^{2}\,.
\end{align*}
Since $N \ge (2e^2)^{2/(1-t)}$ and $\e=(1-t)/2$ (by our choices), it is clear that $4n^{1-t-\varepsilon}\ge 8e^{2}$ and the desired conclusion follows.
\end{proof}

\section{Discussion of the general case}
 \label{sect-gen-res}
\par
\subsection{An estimate of correct asymptotic order}
 \label{subsect-crude-est}
If the conjectured bound on the truncated functional is true for all $n$, the following estimate cannot be sharp. However, it may still be useful so we mention it here.
\begin{prop} \label{prop-est}
If $\|f\|_\cb\le 1$ and $f(z)=\sum_{k=1}^\infty b_k z^k$, then
\begin{equation}
 \sum_{k=1}^n k |b_k|^2 \le \frac{n^n}{(n-1)^{n-1}}\,, \quad \mathrm{for \ all \ } n\ge 2\,.
 \label{eqn-estimate}
\end{equation}
This estimate is of the same asymptotic order as the conjectured estimate since
$$
 \frac{32}{27} n B_n^2 \le \frac{n^n}{(n-1)^{n-1}} \le \frac{4}{e} n B_n^2\,, \quad \mathrm{for \ all \ } n\ge 2\,.
$$
\end{prop}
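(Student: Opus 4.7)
The plan is to estimate the truncated area functional via an area-integral representation of $|f'|^2$ on disks centered at the origin. Since $\|f\|_\cb\le 1$ and $f(0)=0$, we have the pointwise Bloch bound $|f'(z)|\le 1/(1-|z|^2)$. Expanding $f'$ as a power series and using the orthogonality of the monomials on each circle gives
\[
 \frac{1}{\pi}\int_{|z|<r}|f'(z)|^2\,dA(z) \,=\, \sum_{k=1}^{\infty} k|b_k|^2 r^{2k},
\]
while the Bloch bound combined with a direct polar computation yields $\pi^{-1}\int_{|z|<r}(1-|z|^2)^{-2}\,dA(z)=r^2/(1-r^2)$. Hence $\sum_{k=1}^\infty k|b_k|^2 r^{2k}\le r^2/(1-r^2)$ for every $r\in(0,1)$.

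Next I would drop the tail and use the elementary inequality $r^{2k}\ge r^{2n}$, valid for $1\le k\le n$ and $r\in(0,1)$, to obtain
\[
 \sum_{k=1}^{n}k|b_k|^2 \,\le\, \frac{1}{r^{2n-2}(1-r^2)}\,.
\]
Optimizing over $r$ amounts to maximizing $s^{n-1}(1-s)$ on $(0,1)$ with $s=r^2$; an elementary calculus argument gives the unique maximum at $s=(n-1)/n$ with value $(n-1)^{n-1}/n^n$, and this yields exactly $\sum_{k=1}^n k|b_k|^2\le n^n/(n-1)^{n-1}$.

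For the two-sided comparison with $nB_n^2$, I would use the closed form $B_n^2=\frac{(n+1)^2}{4n^2}\bigl(\frac{n+1}{n-1}\bigr)^{n-1}$ to compute
\[
 \frac{n^n/(n-1)^{n-1}}{nB_n^2} \,=\, \frac{4\,n^{n+1}}{(n+1)^{n+1}} \,=\, \frac{4}{(1+1/n)^{n+1}}\,.
\]
Since the sequence $(1+1/n)^{n+1}$ is strictly decreasing in $n$ with limit $e$, the above ratio lies between its value $4/(3/2)^3=32/27$ at $n=2$ and the limiting value $4/e$, establishing both inequalities uniformly in $n\ge 2$.

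The approach is direct and presents no real obstacle; the only point that makes the argument hit the exact bound is the choice of the \emph{area} integral of $|f'|^2$ over a subdisk (instead of Parseval on a single circle), since this produces the weights $k\,r^{2k}$ which are precisely what is needed to extract $\sum_{k=1}^n k|b_k|^2$ after multiplying by $r^{-2n}$, with the subsequent univariate optimization landing exactly on $n^n/(n-1)^{n-1}$.
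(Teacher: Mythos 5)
Your proof is correct and follows essentially the same route as the paper: your area integral of $|f'|^2$ over the subdisk $|z|<r$ is precisely the paper's step of integrating the circle-Parseval inequality $\sum_k k^2|b_k|^2\rho^{k-1}\le(1-\rho)^{-2}$ in the radial parameter, and after the substitution $x=r^2$ the two arguments coincide, including the tail-dropping, the optimization at $x=(n-1)/n$, and the monotone ratio $4/(1+1/n)^{n+1}$ for the two-sided comparison.
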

\begin{proof}
Starting with the standard estimate based on Parseval's equality \cite{ACP, K}:
\begin{equation}
 \sum_{k=1}^n k^2 |b_k|^2 \r^{k-1} \le \frac{1}{(1-\r)^2}\,, \qquad 0\le \r<1\,,
 \label{eqn-basic-fla}
\end{equation}
and integrating with respect to $\rho$ from $0$ to $x$, we obtain:
$$
 \sum_{k=1}^n k |b_k|^2 x^{k-1} \le \frac{1}{1-x}\,, \qquad 0\le x<1\,.
$$
Since $x^{k-1} \ge x^{n-1}$ whenever $1\le k\le n$, it follows that
$$
 \sum_{k=1}^n k |b_k|^2 \le \frac{1}{x^{n-1} (1-x)}
$$
for all $x\in (0,1)$. The right-hand side attains its minimum value
\begin{equation}
 \frac{n^n}{(n-1)^{n-1}}
 \label{eqn-bound}
\end{equation}
at the point $x=(n-1)/n$. This yields the desired estimate \eqref{eqn-estimate}.
\par
As for the order of the estimate obtained, this is easily seen by elementary calculus since the sequence
$$
 \frac{n^n}{(n-1)^{n-1}}\bigg/ (n B_n^2) = \frac{4}{\(\frac{n+1}{n}\)^{n+1}}
$$
is increasing and convergent to $\frac{4}{e} \simeq   1.4715$.
\end{proof}
\par\medskip
\subsection{Remarks on the $n$-th coefficient of an extremal function}\label{subsect-coeff-extr-fcn}
It is desirable to obtain further partial information on the Taylor coefficients of extremal functions for the functional $\F_n$. For example, one would like to show that we must always have $b_n\neq 0$. However, this seemingly simple point has eluded us so far and we have not been able to find a satisfactory proof.
\par
It is, however, relatively simple to show that for every extremal function either $b_{n+1}=0$ or $b_n=0$ by working out an analogue of the Marty variation as in the theory of univalent functions (\textit{cf.\/} \cite[pp.~59--60]{D}). This can be proved by using conformal invariance; more specifically, if $F$ is extremal, the function
$$
 F_\la(z) = F\( \frac{z+\la}{1+\overline{\la}z} \) - F(\la) = \sum_{k=1}^\infty B_k(\la) z^k
$$
has Bloch norm one and satisfies $F_\la(z)=0$, hence it competes with $f$: $\sum_{k=1}^n k |B_k(\la)|^2 \le \sum_{k=1}^n k |b_k|^2$. By considering the asymptotic expansion:
$$
 |B_k(\la)|^2 = |b_k|^2 + 2\mathrm{Re}\,\left\{ \( (k+1) b_{k+1} \overline{b_k} - (k-1) \overline{b_{k-1}} b_k \) \la \right\} + O(|\la|^2)\,, \qquad k\ge 1\,,
$$
for $\la$ close to zero, where $b_0=0$, after some algebra, this leads to
$$
 \mathrm{Re}\, \left\{ \( n (n+1) b_{n+1} \overline{b_n}\) \la\right\} + O(|\la|^2) \le 0
$$
and, upon dividing by $|\la|$ and letting $|\la|\to 0$, since the argument of $\la$ can be arbitrary, we conclude that $n (n+1) b_{n+1} \overline{b_n} = 0$, which proves the statement.
\par\smallskip
As a way of restating the desired property of the $n$-th coefficients of extremal functions for problem \eqref{eqn-Mn}, it is not difficult to show that the sequence $(M_n)_n$ of numbers defined by \eqref{eqn-Mn} is strictly increasing if and only if for each extremal function $F(z) = \sum_{k=1}^\infty b_k z^k$ we have $b_n\neq 0$.
\par
\subsection{Concluding remarks}\label{subsect-concl-rks}
We end this note by pointing out some limitations of the method employed in Subsection~\ref{subsect-pos-coeff}.
\par
Regarding Theorem~\ref{thm-non-neg-coeffs}, one may ask whether in the general case we may restrict our attention to polynomials only. However, knowing only that $\F_n(p)\le n B_n^2$ for all polynomials $p$ of unit norm does not imply in a direct way the same inequality for general functions of norm one. This can be seen as follows. While for a function $f$ in the unit ball and its $n$-th Taylor polynomial $p_n$ we have $\F_n(f)=\F_n(p_n)$, it can be seen that there are functions $f$ in the unit ball for which $\|p_n\|_\cb >1$, hence for the normalized function $p_n/\|p_n\|_\cb$ we obtain $\F_n(p_n/\|p_n\|_\cb) = \F_n(f)/\|p_n\|_\cb^2 < \F_n(f)$, showing that the maximum of the functional over the normalized polynomials could in theory be strictly smaller than the global maximum for problem \eqref{eqn-Mn}. Of course, showing that the conjectured bound is correct would amount to proving that such functions $f$ cannot be extremal; hence, some additional work is required.
\par
Another natural question is whether from the estimate obtained only for functions with non-negative coefficients in Theorem~\ref{thm-non-neg-coeffs} we can immediately deduce the result for arbitrary functions. However, this does not seem so simple since,  employing the usual terminology, the unit ball of $\cb$ is not \textit{solid\/}. In other words, if $f(z)=\sum_{k=1}^\infty b_k z^k$ is in the ball of $\cb$, the function $F(z)=\sum_{k=1}^\infty |b_k| z^k$ need not belong to the ball of $\cb$. It is clear that $F$ is also analytic in the disc and $\|f\|_\cb \le \|F\|_\cb$ by crude estimates based on the triangle inequality, but there are examples showing that this inequality can actually be strict. More generally speaking, a change of signs in some of the coefficients of a function could enlarge its norm. Thus, if $f$ and $F$ are as above (hence $F$ has non-negative coefficients) and also $1=\|F\|_\cb > \|f\|_\cb$, then $f/\|f\|_\cb$ is also of unit norm and
$$
 \F_n\(\frac{f}{\|f\|_\cb}\) = \frac{\F_n(F)}{\|f\|_\cb^2} > \F_n(F)\,, $$
so $F$ cannot be extremal. Hence, a proof of the desired estimate in the general case would show in particular that functions $F$ with the above property cannot be extremal.
\par
One single example can be used to illustrate both phenomena (after the appropriate normalization).
\begin{ex} \label{ex-norms}
Let $n\ge 2$ and let $\e$ be sufficiently small, say $0<\e\le 1/5$. Consider the function $f(z)=z+B_n z^n - \frac{\e z^{2n-1}}{2n-1}$. The associated functions discussed above are $F(z)=z+B_n z^n + \frac{\e z^{2n-1}}{2n-1}$ and $p_n(z)=z+B_n z^n$ and we have the inequality
\begin{equation}
 \norm{ F }_\cb > \norm{ p_n}_\cb > \norm{ f }_\cb\,.
 \label{eqn-ineq-norms}
\end{equation}
\end{ex}
To check the first inequality in \eqref{eqn-ineq-norms}, recall from Subsection~\ref{subsect-pos-coeff} that for every function with positive coefficients in $\cb$ the supremum defining its norm is attained on the radius $[0,1)$. In particular, this shows that
$$
 \norm{F}_\cb = \sup_{0\le r<1} (1-r^2) (1+ n B_n r^{n-1} + \e r^{2n-2}) > \sup_{0\le r<1} (1-r^2) (1+ n B_n r^{n-1}) = \norm{p_n}_\cb \,.
$$
The above inequality holds because we have the strict inequality
$$
 (1-r^2) (1+ n B_n r^{n-1} + \e r^{2n-2}) > (1-r^2) (1+ n B_n r^{n-1})\,, \quad \mathrm{ for } \ 0<r<1\,,
$$
equality holds for $r=0$ and at $r=1$, and also because the smaller quantity is strictly bigger than $1$ (the value of both sides at $r=0$) for at least one $r\in (0,1)$. This last observation follows in view of the fact that $\|B_n z^n\|_\cb=1$ and this norm is attained at the value $r_n=\sqrt{\frac{n-1}{n+1}}\in (0,1)$, hence
$$
 \norm{p_n}_\cb = \sup_{0\le r<1} (1-r^2) (1+ n B_n r^{n-1}) \ge (1-r_n^2) (1 + n B_n r_n^{n-1}) > (1-r_n^2) n B_n r_n^{n-1} = 1\,.
$$
\par
The second inequality in \eqref{eqn-ineq-norms} requires more work. In order to determine $\norm{z+B_n z^n - \frac{\e z^{2n-1}}{2n-1}}_\cb$, we need to obtain an upper estimate on $|1+ n B_n z^{n-1} - \e z^{2n-2}|$. Writing $z^{n-1}=R e^{i t}$ and then $R=r^{n-1}$, $\cos t=x$, one easily computes
\begin{eqnarray*}
 |1+ n B_n z^{n-1} - \e z^{2n-2}|^2 &=& |1+ n B_n R e^{i t} - \e R^2
 e^{2 i t}|^2
\\
 &=& 1 + n^2 B_n^2 R^2 + \e^2 R^4 + 2n B_n R x - 2 n B_n \e R^3 x - 2 \e R^2 (2x^2-1)
\\
 &=& - 4 \e R^2 x^2 + 2n B_n R (1 - \e R^2) x + 1 + n^2 B_n^2 R^2 + 2 \e R^2 + \e^2 R^4
\\
 &=& u(x) \,.
\end{eqnarray*}
Then $u^\prime (x) = 2 R \( n B_n (1 - \e R^2) - 4 \e R x\) \ge 0$ in view of the obvious inequalities $n B_n (1 - \e R^2) \ge 1 - \e R^2  \ge 1- \e\ge 4 \e$. Thus, $u$ attains its maximum at $x=1$ and the maximum value is
$$
 u(1) = 2n B_n R (1 - \e R^2) + 1 + n^2 B_n^2 R^2 - 2 \e R^2 + \e^2 R^4 = (1 + n B_n R)^2 + \e R^2 (\e R^2 - 2 - 2n B_n R)\,.
$$
The second summand in the last expression is negative, hence $u(x) \le u(1) < (1 + n B_n R)^2$. This yields
$$
 \norm{z+B_n z^n - \frac{\e z^{2n-1}}{2n-1}}_\cb \le \sup_{0\le r<1} (1-r^2) (1 + n B_n r^{n-1}) = \norm{p_n}_\cb\,.
$$
Again, the inequality is easily seen to be strict since $\|p_n\|>1$, hence this norm is attained for some $r\in (0,1)$, while the inequality in $u(x) < (1 + n B_n R)^2 = (1 + n B_n r^{n-1})^2$ is strict for $r\in (0,1)$.


\end{document}